\documentclass[12pt]{amsart}

\usepackage{amssymb}
\usepackage{graphicx}
\usepackage{hyperref}
\usepackage{ifthen}
\usepackage{xargs}
\usepackage{xspace}

\newcommand{\CH}{\mathtt{CH}}
\newcommand{\DC}{\mathtt{DC}}
\newcommand{\ZF}{\mathtt{ZF}}
\newcommand{\ZFC}{\mathtt{ZFC}}

\newcommand{\definedterm}[1]{\emph{#1}}

\newcommand{\Baire}{Baire\xspace}
\newcommand{\Borel}{Bor\-el\xspace}
\newcommand{\Cantor}{Can\-tor\xspace}
\newcommand{\Cohen}{Co\-hen\xspace}
\newcommand{\Galvin}{Gal\-vin\xspace}
\newcommand{\Polish}{Po\-lish\xspace}

\newcommand{\Cantorspace}{2^\N}
\newcommand{\Cantortree}{2^{<\N}}
\newcommand{\clique}[2]{\mathrm{CLQ}^{\mathord{\ge} #1}_{#2}}
\newcommand{\closedinterval}[2]{[#1, #2]}
\newcommand{\closedopeninterval}[2]{[#1, #2)}
\newcommand{\independent}[2]{\mathrm{IND}^{#1}_{#2}}
\newcommand{\LiYorke}[1]{\mathrm{SCR}_{#1}}

\newcommand{\distributionallyscrambled}[1]{\mathrm{DSCR}_{#1}}
\newcommand{\distributionallyseparated}[1]{\mathrm{DSEP}_{#1}}
\newcommand{\N}{\mathbb{N}}
\newcommand{\positiveintegers}{\N \setminus \set{0}}
\newcommand{\saturation}[2]{[#1]_{#2}}
\newcommand{\separated}[1]{\mathrm{SEP}_{#1}}

\newcommand{\Z}{\mathbb{Z}}

\newcommand{\absolutevalue}[1]{|#1|}
\newcommand{\bernoullishift}{S}
\newcommand{\calN}{\mathcal{N}}
\newcommand{\cardinality}[1]{|#1|}
\newcommand{\continuum}{\mathfrak{c}}
\newcommand{\composition}{\circ}
\newcommand{\constantsequence}[2]{#1^{#2}}
\newcommand{\extensions}[1]{\calN_{#1}}
\newcommand{\from}{\colon}
\newcommand{\Fsigma}{F_\sigma}
\newcommandx{\functions}[3][3 =]{
  \ifthenelse{\equal{#3}{}}{{#2}^{#1}}{{#2}^{#1}_{#3}}
}
\newcommand{\goesto}{\to}
\newcommand{\image}[2]{#1(#2)}
\newcommand{\injective}[2]{\mathrm{INJ}^{#1}_{#2}}
\newcommandx{\intersection}[2][1 =, 2 =]{
  \ifthenelse{\equal{#1}{}}{\cap}{
    \ifthenelse{\equal{#2}{}}{\bigcap #1}{{\bigcap_{#1} #2}}
  }
}
\newcommand{\inverse}[1]{#1^{-1}}
\newcommand{\length}[1]{|#1|}

\newcommand{\lowermetric}[1]{\underline{d}_{#1}}

\newcommand{\lowerproductmetric}{\underline{\rho}}

\newcommand{\mathcomma}{\text{, }}
\newcommand{\mathcommaand}{\text{, and }}
\newcommand{\metric}[1]{d_{#1}}
\newcommand{\pair}[2]{(#1, #2)}
\newcommand{\preimage}[2]{\inverse{#1}(#2)}
\newcommandx{\product}[2][1 =, 2 =]{
  \ifthenelse{\equal{#1}{}}{\times}{
    \ifthenelse{\equal{#2}{}}{\prod #1}{{\prod_{#1} #2}}
  }
}
\newcommand{\productmetric}{\rho}
\newcommand{\projection}[1]{\mathrm{proj}_{#1}}
\newcommand{\quadruple}[4]{(#1, #2, #3, #4)}
\renewcommand{\restriction}[2]{#1 \upharpoonright #2}
\newcommandx{\sequence}[2][2 = undefined]{
  \ifthenelse{\equal{#2}{undefined}}{(#1)}{
    (#1)_{#2}
  }
}
\newcommandx{\set}[2][2 = undefined]{
  \ifthenelse{\equal{#2}{undefined}}{\{ #1 \}}{
    \{ #1 \suchthat #2 \}
  }
}
\newcommand{\setcomplement}[1]{\twiddle #1}
\newcommand{\suchthat}{\mid}
\newcommand{\support}{\mathrm{supp}}

\newcommand{\twiddle}{\raisebox{1.5pt}{\scalebox{.75}{$\mathord{\sim}$}}}
\newcommandx{\union}[3][1 =, 2 =, 3 =]{
  \ifthenelse{\equal{#1}{}}{\cup}{
    \ifthenelse{\equal{#2}{}}{\bigcup #1}{
      \ifthenelse{\equal{#3}{}}{\bigcup_{#1} #2}{\bigcup_{#1}^{#2}{#3}}
    }
  }
}

\newcommand{\uppermetric}[1]{\overline{d}_{#1}}
\newcommand{\upperproductmetric}{\overline{\rho}}

\newtheorem{introtheorem}{Theorem}
\newtheorem{lemma}{Lemma}[section]
\newtheorem{proposition}[lemma]{Proposition}
\newtheorem{theorem}[lemma]{Theorem}

\theoremstyle{definition}
\newtheorem*{acknowledgements}{Acknowledgements}
\newtheorem*{remark}{Remark}

\begin{document}

\begin{abstract}
  For all $n \ge 2$, we obtain a homeomorphism of \Cantor space that has a
  distributionally $n$-scrambled \Cantor set containing a distributionally
  $(n+1)$-scrambled uncountable set, but does not have an
  $(n+1)$-scrambled \Cantor set.
\end{abstract}

\author{B. Miller}
\address{
  B. Miller \\
  1008 Balsawood Drive \\
  Durham, NC 27705
 }
\email{glimmeffros@gmail.com}
\urladdr{\url{https://glimmeffros.github.io}}

\keywords{distributional chaos, Li--Yorke chaos, scrambled set}

\subjclass{Primary 37B05; Secondary 03E15}

\title{$n$-Scrambled Cantor Sets}

\maketitle

\section*{Introduction}

Given $n \ge 2$, a metric space $X$, $f \from X \to X$, and $x \in \functions
{n}{X}$, define $\lowermetric{X}(x) = \min_{j < k < n} \metric{X}(x(j),x(k))$
and $\uppermetric{X}(x) = \max_{j < k < n} \metric{X}(x(j), x(k))$. We say that
$x$ is \definedterm{proximal} if $\liminf_{i \goesto \infty} \uppermetric{X}(f^i
\composition x) = 0$, \definedterm{distributionally proximal} if $\forall \epsilon
> 0 \ \limsup_{i \goesto \infty} \cardinality{\set{i'<i}[\uppermetric{X}(f^{i'}
\composition x)<\epsilon]} / i=1$, \definedterm{separated} if $\limsup_{i
\goesto \infty} \lowermetric{X}(f^i \composition x) > 0$, \definedterm
{distributionally separated} if $\exists \epsilon > 0 \ \liminf_{i \goesto \infty}
\cardinality{\set{i'<i}[\lowermetric{X}(f^{i'} \composition x)<\epsilon]} / i=0$,
and \definedterm{(distributionally) scrambled} if it is (distributionally) proximal
and (distributionally) separated. A set $Y \subseteq X$ is \definedterm
{(distributionally) $n$-scrambled} if every injection in $\functions{n}{Y}$ is
(distributionally) scrambled, and \definedterm{(distributionally) finitely
scrambled} if it is (distributionally) $n$-scrambled for all $n \ge 2$.

In \cite{GeschkeGrebikMiller}, it was shown that $2$-scrambled uncountable
sets yield $2$-scrambled \Cantor sets in \Polish dynamical systems. Here we
show that the analogous statement for larger $n$ is false:

\begin{introtheorem}[$\ZFC$] \label{introtheorem:main}
  Suppose that $n \ge 2$. Then there is a homeomorphism $T \from
  \Cantorspace \to \Cantorspace$ that has a distributionally $n$-scrambled \Cantor
  set containing a distributionally finitely scrambled uncountable set, yet does not have
  an $(n+1)$-scrambled \Cantor set.
\end{introtheorem}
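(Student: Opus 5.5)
The plan is to realize $T$ as a subshift-like homeomorphism, that is, the restriction of a shift to a closed invariant subset, or more flexibly a ``sequence-space'' homeomorphism on $\Cantorspace$. The key design principle is that \emph{topologically large} sets (\Cantor sets) are forced to contain many $(n+1)$-tuples that are not scrambled, while a carefully chosen uncountable but non-perfect set avoids this.

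Concretely, I would work in $X = \functions{\Z}{\functions{\N}{2}}$ or a closed invariant subspace $X \subseteq \functions{\Z}{2}$ under the shift $\bernoullishift$, which is homeomorphic to $\Cantorspace$ as long as it is perfect. I would code points so that a tuple is proximal iff its members eventually agree on long central blocks, and separated iff they disagree at infinitely many times. The non-existence of an $(n+1)$-scrambled \Cantor set will come from a descriptive-set-theoretic dichotomy. Consider the relation ``$x$ is not scrambled'' on $(n+1)$-tuples. The goal is that it contains (modulo a meager or nicely definable set) a relation of the form ``some two coordinates lie in the same class of a hyperfinite-like equivalence relation $E$'' or ``the tuple is $n+1$ points from a rigid $n$-coloring structure''. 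The guiding example is $n$ colors: partition $X$ into $n$ closed invariant pieces $X_0,\dots,X_{n-1}$ coded by a ``color'' coordinate. Within the same piece, any two points are proximal and separated. Points from different pieces are also made pairwise scrambled, but $(n+1)$-tuples drawn with two points from one piece fail $(n+1)$-proximality because the pieces are visited at disjoint density-zero-complement times.

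That approach fails, however, since an $(n+1)$-tuple \emph{inside} one piece should be allowed to be non-scrambled while the claim requires distributionally finitely scrambled uncountable sets. So instead I would use a Mycielski/Galvin-type argument, via \Galvin's theorem on partitions of $[\Cantorspace]^{n+1}$ or a Kuratowski–Mycielski argument. Any \Cantor set $C$ contains a further \Cantor set on which the colouring of $(n+1)$-tuples by ``pattern of first splitting levels'' is canonical. I would then build $T$ so that for each $(n+1)$-subset of a perfect tree, one of the canonical splitting types (in which $n+1$ branches separate at $n$ distinct levels in a fixed order) is \emph{never} scrambled. I would arrange this by making proximality of a tuple depend on the $\Cantorspace$-parameters via a recursive coding. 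Along times $i$ in a rapidly growing sequence, the orbit ``collapses'' points $x,y$ together only when they agree up to a level depending on $i$, and the collapse is arranged so that at most $n$ distinct values can be simultaneously near. This gives that every injection in $\functions{n}{C}$ can be distributionally scrambled, while $(n+1)$-tuples with the forbidden splitting pattern cannot be simultaneously proximal.

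For the uncountable distributionally finitely scrambled set, I would use $\ZFC$ (presumably $\CH$-free transfinite recursion of length $\omega_1$, or a set of reals avoiding the forbidden splitting pattern). An uncountable set in $\Cantorspace$ all of whose $(n+1)$-subsets avoid a given splitting type need not contain a perfect set. A concrete candidate is an uncountable set whose splitting levels are all distinct and ordered in a prescribed way, like the branches of a Lusin-type or Aronszajn-flavored construction. I would build it by recursion inside the \Cantor set $C$, ensuring all its finite tuples have ``good'' splitting patterns.

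The main obstacle is the middle step: designing the dynamics so that scrambledness of an $(n+1)$-tuple is governed \emph{exactly} by a combinatorial splitting pattern. Proximality must be achieved for good patterns, fail for the bad one, and hold distributionally (density one) rather than just along a subsequence. I would attempt it by a block construction in which, at stage $k$, long blocks of density tending to $1$ merge the orbits of points agreeing below level $k$. Shorter interleaved blocks then enforce separation.
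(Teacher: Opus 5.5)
\textbf{There is a genuine gap: the central mechanism of your plan cannot work.}

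You want a fixed splitting type of $(n+1)$-tuples of parameters that is never scrambled. You also want an uncountable set of parameters none of whose $(n+1)$-subsets has that type. No such uncountable set exists: every uncountable $A \subseteq \Cantorspace$ realizes every splitting type.

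To see this, take the points of $A$ that are condensation points of $A$. They generate a perfect tree each of whose nodes lies below a point of $A$. In that tree you can choose splitting nodes one at a time, at increasing heights, in any prescribed order compatible with the tree order. You then pick points of $A$ through the prescribed children.

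For triples you can check this by hand. Suppose that in every triple $x <_{\mathrm{lex}} y <_{\mathrm{lex}} z$ from $A$ the point $x$ splits off first. Then at each splitting node of the tree of $A$, the left cone contains at most one point of $A$, so $A$ is countable. So the ``concrete candidate'' you describe does not exist.

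More generally, no property determined by splitting type can separate uncountable sets from perfect ones. Galvin/Blass canonization only controls the colouring on \emph{some} perfect subset.

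The missing ingredient is a definable structure, not determined by splitting type, that does make this distinction. This is where $\ZFC$ enters. The paper takes, via Kubi\'s--Vejnar and a Baire category argument, an $F_\sigma$ graph $G$ on $\Cantorspace$ with an uncountable clique but no perfect clique. It then builds a continuous injection $\pi$ into an $S$-invariant perfect subshift with two properties:
\begin{enumerate}
  \item injective $n$-tuples, and $G$-cliques of size at least $n+1$, go to distributionally scrambled tuples;
  \item $G$-independent $(n+1)$-tuples go to non-separated tuples.
\end{enumerate}
The key device is the third block type. Its potentially non-zero coordinates are spaced $m+1$ apart, so separation of an $(n+1)$-tuple at a fixed scale bounds $m$ and forces two coordinates to be $G_m$-related.

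The uncountable clique then gives the distributionally finitely scrambled uncountable set. Galvin's theorem for the graph $G$ (on pairs, not $(n+1)$-tuples) gives, inside any perfect set of parameters, either a perfect clique or a perfect independent set. A perfect clique does not exist, and the $(n+1)$-tuples of a perfect independent set are not scrambled.

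\textbf{There is also a second gap.} The Cantor set to be excluded lives in the phase space, not among the parameters. You never explain how an arbitrary Cantor set there is pulled back to a perfect set of parameters.

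The paper arranges that $F \setminus [\pi(\Cantorspace)]_S$ is countable. Then any uncountable $B \subseteq F$ meets some $S^i(\pi(\Cantorspace))$ in an uncountable set. The perfect set theorem then gives a perfect set of parameters mapping into $B$. Without such a co-countability property, nothing prevents points outside the coded orbits from forming an $(n+1)$-scrambled Cantor set that your argument never touches.
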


Let $\injective{n}{X}$ denote the set of injective sequences of elements of $X$
of length $n$. Let (D)$\LiYorke{f}$ denote the set of
(distributionally) scrambled finite sequences.
A \definedterm{graph} on $X$ is an irreflexive symmetric set $G \subseteq X
\times X$. We say that $Y$ is a \definedterm{$G$-clique} if its distinct
points are $G$-related, and \definedterm{$G$-independent} if its points are
not $G$-related. Let $\clique{n}{G}$ denote the set of injective finite sequences
of length at least $n$ whose images are $G$-cliques, and let $\independent
{n}{G}$ denote the set of injective sequences of length $n$ whose images are
$G$-independent.
A \definedterm{homomorphism} from a relation $R$ on $X$ to a relation $S$ on $Y$ is a
function $\pi \from X \to Y$ whose composition with each sequence in $R$ is in
$S$. Such a function is a \definedterm{homomorphism} from $\pair{R}{R'}$ to
$\pair{S}{S'}$ if it is also a homomorphism from $R'$ to $S'$. The \definedterm
{saturation} of $Y$ under a bijection $T \from X \to X$ is given by
$\saturation{Y}{T} = \union[i \in \Z][\image{T^i}{Y}]$. A subset of a
topological space is \definedterm{$\Fsigma$} if it is a countable union of
closed sets. The primary new technical result underlying our proof of
Theorem \ref{introtheorem:main} is as follows:

\begin{introtheorem}[$\ZF + \DC$] \label{introtheorem:technical}
  Suppose that $G$ is an $\Fsigma$ graph on $\Cantorspace$ and $n \ge 2$.
  Then there exist a homeomorphism $T \from \Cantorspace \to \Cantorspace$
  and a continuous injective homomorphism $\pi \from \Cantorspace \to
  \Cantorspace$ from $\pair{\injective{n}{\Cantorspace} \union \clique{n+1}{G}}
  {\independent{n+1}{G}}$ to $\pair{\distributionallyscrambled{T}}{\setcomplement{\LiYorke
  {T}}}$ for which $\saturation{\image{\pi}
  {\Cantorspace}}{T}$ is co-countable.
\end{introtheorem}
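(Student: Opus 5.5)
Write $G = \union[k \in \N][F_k]$ with each $F_k$ closed and $F_k \subseteq F_{k+1}$; since $G$ is irreflexive and symmetric, we may also assume each $F_k$ is symmetric. The plan is to build $T$ as a shift-like homeomorphism. The phase space will be a compact space of bi-infinite (or one-sided, then modified) sequences homeomorphic to $\Cantorspace$, and $\pi(x)$ will be an orbit point whose coordinates encode $x$ along a sparse sequence of ever longer blocks. Concretely, fix rapidly increasing lengths $\ell_0 \ll \ell_1 \ll \cdots$, where each $\ell_{m+1}$ dominates the sum of all earlier ones so that block $m$ occupies density close to $1$ up to its right endpoint. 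In block $m$, $\pi(x)$ displays one of two patterns depending on $x \upharpoonright m$ and on the finitely many other relevant strings.

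The first pattern is a \emph{synchronizing} pattern: every point shows the same constant word, say zeros. It is used on a set of $m$ of upper density $1$ for every relevant tuple, and it gives distributional proximality of all tuples. The second pattern is a \emph{separating} pattern, which codes $x\upharpoonright m$ through a code that splits apart points with distinct restrictions. To get distributional separation for every injective $n$-tuple, I would reserve, for infinitely many $m$, blocks of density tending to $1$ in which the code of $x\upharpoonright m$ is written redundantly. Any $n$ points with distinct restrictions then have pairwise distinct codes at most positions, so $\lowermetric{}$ stays bounded below except on a set of density $0$ along a subsequence. Taking $\liminf$ of the frequency over those endpoints gives $0$. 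Alternating synchronizing and separating stages of growing length gives both $\limsup = 1$ and $\liminf = 0$ in the required sense, so all of $\injective{n}{\Cantorspace}$ maps into $\distributionallyscrambled{T}$.

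The graph enters as follows. For $(n+1)$-tuples I would use separation only through the $F_k$: in stage $m$ the separating code is written with respect to the clopen approximations of $F_m$ at level $m$. The word written by $x$ encodes, for each $s \in 2^m$, whether $(x\upharpoonright m, s)$ lies in the level-$m$ approximation of $F_m$. This is arranged so that $n+1$ strings can be simultaneously pairwise apart only if all pairs are approximately $F_m$-related; otherwise two of the words coincide. Tuples in $\clique{n+1}{G}$ have all pairs in some $F_k$, so they are eventually coded apart and are distributionally scrambled. For tuples in $\independent{n+1}{G}$, the only way to get $\lowermetric{} > 0$ at time $i$ is a block where all pairs look $F_m$-related. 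Closedness of the $F_m$ and independence show this fails for large $m$, so the orbits have some two coordinates converging together, giving $\limsup \lowermetric{} = 0$ and hence a tuple not in $\LiYorke{T}$. This is where I expect the main obstacle: the same coding must keep every $n$-tuple separated while forcing every independent $(n+1)$-tuple to collapse. I would try to achieve it with an alphabet of size $n$ colors per position (a pigeonhole: $n+1$ points, $n$ colors). At each position, $G$-related pairs are given distinct colors in a consistent way across the finitely many strings of length $m$, using the clique structure within $F_m$'s approximation, with the positions chosen so that every $n$-set of strings is rainbow somewhere in most positions.

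Finally, to make $T$ a homeomorphism of $\Cantorspace$ with $\saturation{\image{\pi}{\Cantorspace}}{T}$ co-countable, I would take the phase space to be the closure of the orbits of $\image{\pi}{\Cantorspace}$ in a shift. I would then check that every point not in these orbits lies in countably many exceptional orbits, namely eventually-periodic or limit patterns built from the constant synchronizing word. If needed, I would add a countable set of isolated-free points to make the space perfect, so that it is homeomorphic to $\Cantorspace$. Continuity and injectivity of $\pi$ are immediate from the blockwise coding.
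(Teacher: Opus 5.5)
There is a genuine gap, and it sits exactly where you expected the main obstacle: showing that tuples in $\mathrm{IND}^{n+1}_G$ are not separated.

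You argue that $n+1$ strings can be pairwise apart in stage $m$ only if all their pairs lie in the level-$m$ approximation of $F_m$, and that closedness of the $F_m$ plus independence makes this fail for large $m$. That step is false. The index of the closed set and the approximation level go to infinity together, and closedness of $F_k$ only helps when $k$ is held fixed while the level grows. Concretely, let $D$ be the eventually zero sequences, let $F_m$ be the finite set of pairs of distinct sequences vanishing from coordinate $m$ on, and let $G=\bigcup_m F_m$. Then every pair with distinct restrictions to $m$ lies in the level-$m$ approximation of $F_m$. So any $n+1$ points outside $D$ form a $G$-independent tuple that your coding treats exactly like a clique.

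The claim that two words coincide otherwise also fails for the adjacency-row code: if $s\!-\!t$ is the only edge among $s,t,u$, their rows $\{t\},\{s\},\emptyset$ are pairwise distinct.

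The $n$-color pigeonhole does not rescue this. Per-position coincidences give a coinciding pair at each position, not one pair agreeing on a whole window, which is what $\limsup\underline{d}=0$ needs. If you instead make colorings constant on sub-blocks separated by zero gaps, pigeonhole does collapse a fixed pair, but it collapses every $(n+1)$-tuple, including the cliques you must separate. You then need blocks with more than $n$ labels, and the same diagonal problem returns there.

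What is missing is a mechanism by which separation at a fixed scale bounds the index of the closed graph, so that it can be decoupled from the level $\ell$. The paper uses dedicated, periodically repeated blocks indexed by near bijections $p\colon 2^\ell\to n'$. Only the $n'-1$ designated strings get nonzero labels; all other strings show zeros there.

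\begin{itemize}
\item Type (1) blocks carry two patterns and type (2) blocks carry $n$. A type (2) block separates the $n$-tuple it was built for, at a width independent of $\ell$, but can never separate $n+1$ points.
\item For $n'>n$, the designated strings must be pairwise $G_{\ell,m}$-related, and their ones are spaced $m+1$ apart. If an $(n+1)$-tuple is separated in a window of radius $a$, at least $n$ of its points show ones there, at positions pairwise at least $m+1$ apart. This forces $(m+1)(n-1)\le 2a$.
\item Hence $m$ is bounded. You can fix $m$ and a related pair along a subsequence with $\ell\to\infty$, and closedness of the single graph $G_m$ gives a $G$-edge.
\end{itemize}

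A secondary point: if your blocks carry arbitrary code words of growing length, the orbit closure may gain uncountably many new points, so co-countability is not justified. The paper's blocks are periodic of period $q(i)$. This forces each new limit point to be periodic, periodic on a half-line and zero on the other, or of support at most one.
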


In \S\ref{section:coding}, we establish Theorem \ref{introtheorem:technical}.
In \S\ref{section:consequences}, we obtain Theorem \ref{introtheorem:main}
and a related independence phenomenon as corollaries.

\section{Hypergraph homomorphisms} \label{section:coding}

Define $\bernoullishift \from \functions{\Z}{2} \to \functions{\Z}{2}$ by
$\bernoullishift(c)(i) = c(i+1)$ for all $i \in \Z$ and $c \in \functions{\Z}{2}$.
Let (D)$\separated{f}$ denote the set of (distributionally) separated
finite sequences. As (distributional) scrambling is preserved by topological conjugacy of
compact metric spaces, Theorem \ref{introtheorem:technical} is a
consequence of the topological characterization of $\Cantorspace$
(see, for example, \cite[Proposition 7.4]{Kechris}) and:

\begin{theorem}[$\ZF + \DC$] \label{theorem:technical}
Suppose that $G$ is an $\Fsigma$ graph on $\Cantorspace$ and $n \ge 2$.
Then there exist an $\bernoullishift$-invariant perfect set $F \subseteq
\functions{\Z}{2}$ and a continuous injective homomorphism $\pi \from
\Cantorspace \to F$ from $\pair{\injective{n}{\Cantorspace} \union \clique
{n+1}{G}}{\independent{n+1}{G}}$ to $\pair{\distributionallyscrambled{\bernoullishift}}
{\setcomplement{\separated{\bernoullishift}}}$ for which $F \setminus
\saturation{\image{\pi}{\Cantorspace}}{\bernoullishift}$ is countable.
\end{theorem}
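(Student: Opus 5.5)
Write $G = \union[k \in \N][G_k]$ with each $G_k$ closed, increasing, and (after symmetrizing) symmetric. The plan is to build $\pi$ as a block-coding map. $\pi(x) \in \functions{\Z}{2}$ will be a concatenation of blocks of rapidly increasing lengths $\ell_0 \ll \ell_1 \ll \cdots$. The block at stage $k$ will be determined continuously by $\restriction{x}{k}$ together with, for each relevant tuple, information about $G_k$ read off from finite approximations of $x$. We then take $F$ to be the closure of the $\bernoullishift$-orbit of $\image{\pi}{\Cantorspace}$, which is $\bernoullishift$-invariant and compact.

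At each stage $k$ we append three kinds of very long windows.

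\emph{Collapse windows.} On these, every $\pi(x)$ is identically $0$. Their lengths dominate everything written before them, so every tuple is distributionally proximal.

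\emph{Separation windows for injective $n$-tuples.} These handle each pair of distinct basic sets $\extensions{s} \neq \extensions{t}$ with $s, t \in \functions{k}{2}$. Here $\pi(x)$ copies a long block determined by $\restriction{x}{k}$, chosen so that distinct codes differ at a positive density of positions. Since any $n$ distinct points are eventually split by level $k$, the pairwise minimum distance stays $\ge$ some $\epsilon$ on a set of upper density $1$. This yields membership in $\distributionallyscrambled{\bernoullishift}$ for all of $\injective{n}{\Cantorspace}$. Because any $n$ points are separated simultaneously, we must run through all $n$-subsets of $\functions{k}{2}$ at once; coding by $\restriction{x}{k}$ does this automatically.

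\emph{Clique windows for $(n+1)$-tuples.} These are the delicate part. The point $x$ writes a block that depends on $\restriction{x}{k}$ only through a "colour" whose choice is governed by $G_k$-adjacency. We want $n+1$ points that form a $G$-clique to be simultaneously pairwise different on these windows. At the same time, if some two of $n+1$ points are $G$-non-adjacent, then at every time some pair of the tuple must agree on a long window. Then $\lowermetric{}(\bernoullishift^i \composition \pi \composition x) \to 0$, putting the tuple in $\setcomplement{\separated{\bernoullishift}}$.

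The idea is the following. The letters available are only $n$ "free" patterns on generic positions, so any $n+1$ points must repeat a pattern by pigeonhole, unless the window assigns a fresh pattern to a pair, which we permit only when that pair is $G_k$-related. Concretely, on the $k$th clique window, index positions by pairs of basic neighbourhoods $(u,v)$ with $\extensions{u} \times \extensions{v} \cap G_k \neq \emptyset$ (finitely many at level $k$), and let points in $\extensions{u}$ deviate from the default pattern only in coordinates designated for neighbours. Since $G_k$ is closed, a non-adjacent pair $(x,y)$ eventually has $\extensions{\restriction{x}{k}} \times \extensions{\restriction{y}{k}}$ disjoint from $G_j$ for all small $j$. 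The main obstacle, which I expect to need real care, is the $F_\sigma$ issue: $x,y$ may be $G_j$-related only for large $j$. So non-separation must be proved along every shift $i$, not just along block boundaries, by interleaving the colouring with the $n$-pigeonhole and using that the window lengths grow fast enough that boundary effects have density $0$ and distance $0$ in the limit.

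Finally, $\pi$ is continuous and injective because blocks depend on finite initial segments and the separation windows distinguish points. $F$ is perfect since $\image{\pi}{\Cantorspace}$ has no isolated points under the product topology. Countability of $F \setminus \saturation{\image{\pi}{\Cantorspace}}{\bernoullishift}$ comes from the following observation. Any limit of shifts $\bernoullishift^{i_m}(\pi(x_m))$ with $\absolutevalue{i_m} \to \infty$ sees, in any bounded region, only a piece of at most one window boundary. So it is eventually periodic on each side, built from the constant $0$ and finitely many boundary patterns, and there are countably many such sequences.
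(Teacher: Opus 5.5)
There is a genuine gap at the step on which the whole statement hinges: your clique windows do not handle an $\Fsigma$ graph. Your test $\extensions{u}\times\extensions{v}\cap G_k\neq\emptyset$ for $u,v\in\functions{k}{2}$ ties the closed piece $G_k$ to the neighbourhood level $k$, and $G$-independent points can pass it at every level.

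For instance, let $G$ consist of the pairs of distinct points of $\Cantorspace$ that differ at only finitely many coordinates. Let $G_k$ be the closed graph of pairs $(x,y)$ with $\restriction{x}{k}\neq\restriction{y}{k}$ and $x(j)=y(j)$ for all $j\ge k$. Every pair with distinct level-$k$ restrictions passes your level-$k$ test, witnessed by the points that agree with them below $k$ and vanish from $k$ on. Now take $n+1$ points from distinct finite-difference classes; this is a tuple in $\independent{n+1}{G}$. Nothing in your clique windows distinguishes it from a genuine clique with the same level-$k$ restrictions, and your mechanism gives both pairwise distinct patterns at the same scale. Your remark that a non-adjacent pair eventually avoids each fixed $G_j$ is true, but it does not bear on a test that uses $G_k$ at level $k$. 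Analysing arbitrary shifts or boundary effects cannot help either, since the problem is which tuples get distinct patterns at a fixed scale.

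The missing idea is to decouple the level $\ell$ from the index $m$ and make $m$ visible in the geometry.
\begin{itemize}
\item For all $\ell$, $m$, and $n'>n$ (every $n'$ is needed, as $\clique{n+1}{G}$ contains cliques of all finite sizes at least $n+1$), dedicate a window to each labelling of $\functions{\ell}{2}$ by $n'$ labels in which each nonzero label is carried by a single cell and any two such cells $u,v$ satisfy $\extensions{u}\times\extensions{v}\cap G_m\neq\emptyset$.
\item Let label $k'$ write a $1$ at offset $(k'-1)(m+1)$ in each period.
\item A genuine clique lies in a single $G_m$, so it receives windows with that fixed $m$ at every large $\ell$.
\item Conversely, once all other windows carry at most $n$ patterns (see below), an $(n+1)$-tuple separated at a fixed $\epsilon$, i.e.\ a fixed radius $a$, must be separated on clique windows. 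There at least $n$ of its points have a $1$ in a window of length $2a+1$, forcing $(m+1)(n-1)\le 2a$.
\item Pass to a subsequence on which $m$ and the relevant pair of points are constant while $\ell\to\infty$; closedness of $G_m$ then yields a $G_m$-edge.
\end{itemize}
One edge is all you need, since $\independent{n+1}{G}$ consists of edgeless tuples. Your aim of making every non-clique non-separated is stronger than necessary.

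Your separation windows also undercut the pigeonhole that your clique windows rely on. Your argument for $n$-tuples uses only that distinct codes, determined injectively by $\restriction{x}{k}$, are far apart at most times of the window. That is a pairwise property, so it applies verbatim to $n+1$ distinct points and would make every injective $(n+1)$-tuple separated, independent ones included. The fix is to let every window other than the clique windows carry at most $n$ distinct patterns. Instead of treating all $n$-subsets of $\functions{k}{2}$ at once, dedicate a window to each colouring of $\functions{\ell}{2}$ with $n$ colours, for every $\ell$. Each colour writes a short periodic pattern whose period does not depend on $\ell$, and each window is long compared to everything before it.

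Bounded periods are also what distributional separation needs. ``Distinct codes differ at a positive density of positions'' is not enough: for a fixed $\epsilon$, each pair must differ within a fixed distance of almost every time of the window. Bounded periods are likewise what your countability argument for $F\setminus\saturation{\image{\pi}{\Cantorspace}}{\bernoullishift}$ tacitly uses, together with the fact that in clique windows of unbounded width each point has at most one $1$ per period. Long codes copied from $\restriction{x}{k}$ can produce uncountably many new limit points. For example, if they contain all finite words then $F=\functions{\Z}{2}$, whereas the saturation is meager.
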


\begin{proof}
For all $s \in \Cantortree$, set $\extensions{s} = \set{c \in \Cantorspace}
[\forall i < \absolutevalue{s} \ s(i) = c(i)]$. Given $\ell \in \N$ and $n' \ge n$,
we say that a function $p \from \functions{\ell}{2} \to n'$ is a \definedterm
{near bijection} if it induces a bijection of $\functions{\ell}{2} \setminus
\preimage{p}{\set{0}}$ with $n' \setminus \set{0}$ but $\set{\constantsequence
{0}{\ell}} \subsetneq \preimage{p}{\set{0}}$. For all $0 < k < n'$, let $\inverse{p}
(k)$ denote the unique element of $\preimage{p}{\set{k}}$.

The construction uses three types of coding blocks. The first
ensures injectivity, the second separates arbitrary $n$-tuples,
and the third separates larger $G$-cliques. In the third type,
spacing the potentially non-zero coordinates by $m+1$ ensures that
separation of an $(n+1)$-tuple at a fixed scale bounds $m$.
Long repetitions yield distributional separation, while long
intervening gaps yield distributional proximality.

Fix an increasing sequence $\sequence{G_m}[m \in \N]$ of closed graphs on $\Cantorspace$ whose
union is $G$. Define $G_{\ell,m} = \set
{\pair{c}{c'} \in \Cantorspace \times \Cantorspace}[G_m \intersection
(\extensions{\restriction{c}{\ell}} \times
\extensions{\restriction{c'}{\ell}}) \neq \emptyset]$. Let $P_\ell$ denote the set of near bijections
$p \from \functions{\ell}{2} \to n$, and
let $P_{\ell,m,n'}$ denote the set of
near bijections $p \from \functions{\ell}{2} \to n'$ such that
$\extensions{\inverse{p}(j)} \times \extensions{\inverse{p}(k)}
\subseteq G_{\ell,m}$ for all $0 < j < k < n'$. Fix an infinite set
$I \subseteq \positiveintegers$, a bijection
$\phi \from I \to \N \union \union[\ell \in \N][(\set{\ell} \times P_\ell)]
\union \union[\ell,m \in \N, n' > n][(\set{\ell} \times \set{m}
\times \set{n'} \times P_{\ell,m,n'})]$, and a function
$r \from I \to \positiveintegers$ with the property that $i^2 \le r(i)$ and
$I \intersection \closedinterval{i}{i(i+r(i)q(i))} = \set{i}$ for
all $i \in I$, where $w(i)=0$ if $\phi(i) \in \N$, $w(i)=n-2$ if
$\phi(i) \in \set{\ell} \times P_\ell$ for some $\ell \in \N$,
$w(i)=(m+1)(n'-2)$ if
$\phi(i) \in \set{\ell} \times \set{m} \times \set{n'} \times P_{\ell,m,n'}$ for some
$\ell,m \in \N$ and
$n' > n$, and $q(i) = \max(1, 2w(i))$. Define a continuous function
$\pi \from \Cantorspace \to \functions{\Z}{2}$ by setting
$\pi(c)(i') = 1$ if and only if either $i' = 0$ or there exist $i \in I$ and $t < r(i)$
such that one of the following holds:

\begin{enumerate}
  \item $\phi(i) \in \N$, $i' = i + tq(i)$, and $c(\phi(i)) = 1$;
  \item $\exists 0 < k < n \exists \ell \in \N \exists p \in
    P_\ell$ \\
      \hspace*{6pt}
      $\pair{\ell}{p} = \phi(i) \mathcomma i' = i + tq(i)+k-1 \mathcommaand p
        (\restriction{c}{\ell}) = k$; or
  \item $\exists n' > n \exists 0 < k < n' \exists \ell,m \in \N \exists p \in
    P_{\ell,m,n'}$ \\
    \hspace*{6pt} $\quadruple{\ell}{m}{n'}{p}=\phi(i) \mathcomma i' = i + tq(i) +
      (k-1)(m+1) \mathcommaand$ \\
    \hspace*{12pt} $p(\restriction{c}{\ell}) = k$.
\end{enumerate}

\begin{lemma} \label{lemma:injective}
$\pi$ is injective.
\end{lemma}

\begin{proof}
Suppose that $c, d \in \Cantorspace$ are distinct. Then there exists $k \in
\N$ with $c(k) \neq d(k)$. Fix $i \in I$ for which $\phi(i) = k$. As $\pi(c)(i) =
c(k)$ and $\pi(d)(i) = d(k)$, it follows that $\pi(c)(i) \neq \pi(d)(i)$, so $\pi(c)
\neq \pi(d)$.
\end{proof}

The \definedterm{product metric} on $\functions{\Z}{2}$ is given by $\productmetric(c,d)
= \sum_{i \in \Z} \absolutevalue{c(i) - d(i)} / 2^{\absolutevalue{i}}$ for all
$c, d \in \functions{\Z}{2}$. For all $i \in I$, set $N_i =
\closedinterval{i}{i + w(i)}$ and $N_i' = \union[t < r(i)][N_{i,t}]$, where
$N_{i,t} = N_i + tq(i)$.

\begin{lemma}
$\pi$ is a homomorphism from
$\clique{n+1}{G} \union \injective{n}{\Cantorspace}$ to
$\distributionallyseparated{\bernoullishift}$.
\end{lemma}

\begin{proof}
Suppose that $c \in \functions{n'}{(\Cantorspace)}$ is in $\clique{n+1}{G} \union
\injective{n}{\Cantorspace}$. By permuting the coordinates of $c$, we can
assume that $c(0) = \constantsequence{0}{\N}$ if $\constantsequence{0}{\N}
\in \image{c}{n'}$. If $n' > n$, then fix $m \in \N$ such that $c(j) \mathrel{G_m}
c(k)$ for all $j < k < n'$. Suppose now that $\ell \in \N$ is sufficiently large that
$\projection{\functions{\ell}{2}} \composition c$ is injective, $\restriction{c(k)}
{\ell} \neq \constantsequence{0}{\ell}$ for all $0 < k < n'$, and $n' < 2^\ell$.
If $n' = n$, then let $p$ be the extension of $\inverse{(\projection{\functions
{\ell}{2}} \composition c)}$ in $P_\ell$, and fix $i \in I$ such that $\phi(i) = \pair
{\ell}{p}$. If $n' > n$, then let $p$ be the extension of $\inverse{(\projection
{\functions{\ell}{2}} \composition c)}$ in $P_{\ell,m,n'}$, and fix $i \in I$ such
that $\phi(i)=\quadruple{\ell}{m}{n'}{p}$. Note that $q(i)$ depends only on $n$
when $n' = n$, and only on the fixed values of $m$ and $n'$ otherwise, so it is
independent of $\ell$. If $t < r(i)$, then $\projection
{\functions{N_{i,t}}{2}} \composition \pi \composition c$ is injective, so
$\lowerproductmetric(S^{i'} \composition \pi \composition c) \ge 1/2^{q(i)}$
for all $i \le i'<i+q(i)r(i)$, thus
\begin{align*}
  \textstyle
  \cardinality{& \set{i'<i+q(i)r(i)}[\lowerproductmetric
    (S^{i'} \composition \pi \composition c)<1/2^{q(i)}]} /(i+q(i)r(i)) \\
    & \le i/(i+q(i)r(i)) \le 1/(1+i q(i)) \le 1/(1+i) \goesto 0
\end{align*}
as $i \goesto \infty$, and therefore as $\ell \goesto \infty$, hence $\pi \composition c \in \distributionallyseparated
{\bernoullishift}$.
\end{proof}

\begin{lemma}
  $\pi$ is a homomorphism from $\independent{n+1}{G}$ to $\setcomplement
  {\separated{\bernoullishift}}$.
\end{lemma}

\begin{proof}
  Suppose that $c \in \functions{n+1}{(\Cantorspace)}$ and $\pi \composition c$
  is separated. Fix $\epsilon > 0$ such that $I' = \set{i' \in \N}[\lowerproductmetric
  (\bernoullishift^{i'} \composition \pi \composition c) \ge \epsilon]$ is
  infinite, as well as $a \in \N$ such that $\sum_{\absolutevalue{i}>a} 1 /
  2^{\absolutevalue{i}} < \epsilon$. By throwing out finitely many elements of
  $I'$, we can assume that each $i' \in I'$ has the property that $i' > a$ and there is at most one $i \in I$ for
  which $\closedinterval{i'-a}{i'+a} \intersection N_i' \neq \emptyset$,
  so our choice of $a$ yields a unique such $i$, thus
  $\projection{\functions{N_{i,t}}{2}} \composition \pi \composition c$ is injective
  for all $t < r(i)$.
  As blocks of types (1) and (2) admit at most $n$ distinct patterns,
  there exists $n' > n$ for which $\phi(i)=\quadruple{\ell}{m}{n'}{p}$. As the restrictions
$\restriction{\pi(c(k))}{\closedinterval{i'-a}{i'+a}}$, for $k \le n$,
are distinct and have pairwise disjoint supports, the size of
  $\closedinterval{i'-a}{i'+a} \intersection \union[k \le n][\support(\pi(c(k)))]$ is at least
  $n$. As the difference between distinct points of this set is at least $m+1$,
  it follows that $(m+1)(n-1) \le 2a$.
  As at least two coordinates receive distinct positive labels,
  there exist $j<k<n+1$ with $c(j)\mathrel{G_{\ell,m}} c(k)$.
  By passing to an
  infinite subset of $I'$, we can assume that the same $j$, $k$, and $m$
  correspond to all elements of $I'$. Then there are infinitely many values of
  $\ell$ corresponding to the remaining elements of $I'$, since otherwise there
  are only finitely many possibilities for $n'$ and $p$, so
  only finitely many for $i$, thus only finitely many for $i'$, since $i' \in N_i' +
  \closedinterval{-a}{a}$. As $G_{\ell,m}$ is decreasing in $\ell$, it follows that
  $c(j) \mathrel{G_{\ell,m}} c(k)$ for all $\ell \in \N$, so $c(j) \mathrel{G_m} c(k)$,
  thus $c(j) \mathrel{G} c(k)$, hence $c \notin \independent{n+1}{G}$.
\end{proof}

Clearly, the closure $F$ of $\saturation{\image{\pi}{\Cantorspace}}{\bernoullishift}$ is
perfect.

\begin{lemma} \label{lemma:atmostcountable}
$F \setminus \saturation{\image{\pi}{\Cantorspace}}{\bernoullishift}$ is
countable.
\end{lemma}

\begin{proof}
As each $\image{\bernoullishift^i\composition\pi}{\Cantorspace}$
is compact, we need only consider limits of sequences of the form
$\sequence{\bernoullishift^{\pm i_k}(\pi(c_k))}[k \in \N]$, where $\sequence{c_k}[k \in \N]$ is a
sequence of elements of $\Cantorspace$
and $\sequence{i_k}[k \in \N]$ is a strictly increasing sequence of elements
of $\N$. Clearly $\bernoullishift^{-i_k}(\pi(c_k)) \goesto \constantsequence{0}
{\Z}$. Suppose that $\bernoullishift^{i_k}(\pi(c_k)) \goesto c$. If the distance
from $i_k$ to $\union[i \in I][N_i']$ goes to infinity, then $c =
\constantsequence{0}{\Z}$. Otherwise, by passing to a subsequence, we can assume
that there exist $i_k' \in I$ such that the distance from $i_k$ to $N_{i_k'}'$ is fixed.
If $w(i_k') \goesto \infty$, then every fixed neighborhood of $i_k$
eventually meets at most one $N_{i_k',t}$, and each $\pi(c_k)$ has
at most one non-zero coordinate in each such block, so
$\cardinality{\support(c)} \le 1$.
Otherwise, by passing to a subsequence, we can
assume that $w(i_k')$ is constant. On
$\closedopeninterval{i_k'}{i_k'+q(i_k')r(i_k')}$, each $\pi(c_k)$ is either
zero or the restriction of a periodic sequence of period $q(i_k')$.
Since $r(i_k') \goesto \infty$, the sequence $c$ is either periodic or
obtained from a periodic sequence by replacing all coordinates on one
side of an integer by zero.
\end{proof}

To see that each $c \in \functions{<\N}{\image{\pi}{\Cantorspace}}$ is
distributionally proximal, suppose that $\epsilon > 0$ and fix $a \in \N$ with
$\sum_{\absolutevalue{k}>a}1/2^{\absolutevalue{k}}<\epsilon$. Given
$i \in I$, let $i'$ be the next element of $I$. If $i + q(i)r(i) + a \le i'' < i' - a$,
then $\union[k<\length{c}][\support(c(k))] \subseteq \set{0} \union \union[i \in
I][N_i']$, so $\upperproductmetric(S^{i''} \composition c) < \epsilon$,
thus 
\begin{align*}
  \cardinality{\set{i'' < i'}[\upperproductmetric(S^{i''} \composition c) < \epsilon]} / i'
    & \ge 1 - (i + q(i)r(i) + 2a) / i'  \\
    & \ge 1 - (1 / i) - (2a / i') \goesto 1
\end{align*}
as $i \goesto \infty$.
\end{proof}

\begin{proposition}[$\ZF + \DC$] \label{proposition:perfect}
Suppose that $n \ge 3$, $X$ and $Y$ are \Polish spaces, $G$ is a \Borel
graph on $X$, $T \from Y \to Y$ is a \Borel automorphism, $B
\subseteq Y$ is an $n$-scrambled uncountable \Borel set,
$\pi \from X \to Y$ is an injective \Borel
homomorphism from $\independent{n}{G}$ to $\setcomplement{\LiYorke{T}}$, and
$\saturation{\image{\pi}{X}}{T}$ is co-countable. Then $G$ has
a perfect clique.
\end{proposition}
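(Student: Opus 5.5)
Since $B$ is an uncountable \Borel set and $Y \setminus \saturation{\image{\pi}{X}}{T}$ is countable, $B \intersection \saturation{\image{\pi}{X}}{T}$ is uncountable and \Borel, so by the perfect set theorem for \Borel sets it contains a \Cantor set $K$. Because $\saturation{\image{\pi}{X}}{T} = \union[i \in \Z][\image{T^i}{\image{\pi}{X}}]$ is a countable union of \Borel sets ($\pi$ is an injective \Borel map, so its image is \Borel by Lusin--Souslin), we can pick $i \in \Z$ with $K \intersection \image{T^i \composition \pi}{X}$ uncountable. Shrinking again, we obtain a \Cantor set $K' \subseteq B \intersection \image{T^i}{\image{\pi}{X}}$. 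Then $C = \preimage{(T^i \composition \pi)}{K'}$ is an uncountable \Borel subset of $X$, since $T^i \composition \pi$ is an injective \Borel function. Hence $C$ contains a \Cantor set $P$, and $\restriction{(T^i \composition \pi)}{P}$ is a \Borel injection into the $n$-scrambled set $B$.

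We claim that $P$ is a $G$-clique, or at least that it contains a perfect $G$-clique. Take a $G$-independent injective sequence $x \in \independent{n}{G}$ with values in $P$. By hypothesis $\pi \composition x \notin \LiYorke{T}$. Scrambling is invariant under applying $T^i$ coordinatewise, because proximality and separation are asymptotic along forward orbits and shifting by $i$ changes only finitely many terms. This needs care: $T$ is merely \Borel, and the notion uses a metric on $Y$, but $T^i \composition \pi \composition x$ is exactly the sequence of iterates shifted by $i$, so $\liminf$ and $\limsup$ over the orbit are unchanged. For $i<0$, we instead view $\pi \composition x$ as the $T^{-i}$-shift of $T^i \composition \pi \composition x$. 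Thus $T^i \composition \pi \composition x$ is not scrambled. However, its values are distinct points of $B$, and $B$ is $n$-scrambled, a contradiction. Therefore $P$ contains no $G$-independent set of size $n$, i.e.\ the graph $\intersection{G}{(P \times P)}$ has no independent $n$-set.

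It remains to get a perfect clique from a perfect set with no independent $n$-set, using $n \ge 3$ only so that the hypothesis is nontrivial. The main obstacle is this Ramsey step. I would first try \Galvin's theorem that for a partition of $\functions{2}{P}$ into a \Borel (indeed Baire-measurable) pair of pieces there is a perfect homogeneous subset. Colour unordered pairs $\set{x,y}$ of $P$ by whether $x \mathrel{G} y$, which is a \Borel colouring. \Galvin's theorem yields a perfect $Q \subseteq P$ that is either a $G$-clique or $G$-independent. The latter is impossible, since $Q$ is uncountable and any $n$ of its points would form an element of $\independent{n}{G}$. Hence $Q$ is a perfect $G$-clique. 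Under $\ZF + \DC$, \Galvin's theorem for \Borel (even Baire property) colourings of pairs is available, so all steps go through in the stated theory.
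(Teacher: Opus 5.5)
Your proposal is correct and is essentially the paper's proof. You choose $i \in \mathbb{Z}$ with $B \cap (T^i \circ \pi)(X)$ uncountable, take a perfect subset of $(T^i \circ \pi)^{-1}(B)$, apply Galvin's theorem, and rule out the independent alternative using the $n$-scrambledness of $B$, also spelling out the invariance of scrambledness under $T^i$ that the paper leaves implicit; the intermediate Cantor sets $K$ and $K'$ are harmless but unnecessary.
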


\begin{proof}
Fix $i \in \Z$ for which $B \intersection \image{(T^i \composition \pi)}
{X}$ is uncountable. The perfect set theorem and \Galvin's Theorem (see, for example, \cite
[Theorems 13.6 and 19.7]{Kechris}) then yield a perfect set $P \subseteq \preimage
{(T^i \composition \pi)}{B}$ that is either a $G$-clique or $G$-independent.
But $G$-independence would contradict the fact that $B$ is $n$-scrambled.
\end{proof}

\section{$n$-Scrambled sets} \label{section:consequences}

We begin this section with the proof of our primary result:

\begin{proof}[Proof (of Theorem \ref{introtheorem:main})]
By \cite[Theorem 2.1]{KubisVejnar} and a simple \Baire category
argument, there is an $\Fsigma$ graph $G$ on $\Cantorspace$ with an
uncountable clique $Y$ but no perfect clique.
By Theorem \ref{introtheorem:technical}, there exist a homeomorphism
$T \from \Cantorspace \to \Cantorspace$ and a continuous injective
homomorphism $\pi \from \Cantorspace \to \Cantorspace$ from
$\pair{\injective{n}{\Cantorspace} \union \clique{n+1}{G}}{\independent{n+1}{G}}$ to
$\pair{\distributionallyscrambled{T}}{\setcomplement{\LiYorke{T}}}$ for which
$\saturation{\image{\pi}{\Cantorspace}}{T}$ is co-countable. Then
$\image{\pi}{\Cantorspace}$ is distributionally $n$-scrambled,
$\image{\pi}{Y}$ is a distributionally finitely scrambled uncountable set, and
Proposition \ref{proposition:perfect} rules out an $(n+1)$-scrambled \Cantor set. 
\end{proof}

Let $\continuum$ denote the cardinality of the continuum.

\begin{theorem}
It is consistent with $\ZFC + \neg \CH$ that, for all $n \ge 2$, there is a homeomorphism
$T \from \Cantorspace \to \Cantorspace$ that has a distributionally $n$-scrambled
\Cantor set containing a distributionally finitely scrambled set of cardinality
$\continuum$, but does not have an $(n+1)$-scrambled \Cantor set.
\end{theorem}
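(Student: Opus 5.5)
The plan is to repeat the proof of Theorem \ref{introtheorem:main} in a model where we have an $\Fsigma$ graph $G$ on $\Cantorspace$ with a clique of cardinality $\continuum$ but no perfect clique. Given such a $G$, Theorem \ref{introtheorem:technical} gives a homeomorphism $T \from \Cantorspace \to \Cantorspace$ and a continuous injective homomorphism $\pi$ from $\pair{\injective{n}{\Cantorspace} \union \clique{n+1}{G}}{\independent{n+1}{G}}$ to $\pair{\distributionallyscrambled{T}}{\setcomplement{\LiYorke{T}}}$, with $\saturation{\image{\pi}{\Cantorspace}}{T}$ co-countable. Then $\image{\pi}{\Cantorspace}$ is a distributionally $n$-scrambled \Cantor set. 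If $Y$ is a $G$-clique of size $\continuum$, every injective tuple in $Y$ of length at least $n+1$ lies in $\clique{n+1}{G}$, and every injective $n$-tuple lies in $\injective{n}{\Cantorspace}$. So $\image{\pi}{Y}$ is distributionally finitely scrambled, and it has cardinality $\continuum$ because $\pi$ is injective. Proposition \ref{proposition:perfect}, applied with $n+1 \ge 3$ in place of $n$, shows that an $(n+1)$-scrambled \Cantor set $B$ would give a perfect $G$-clique. Hence no such set exists. Because $G$ does not depend on $n$, one model works for every $n \ge 2$ at once.

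It remains to get a model of $\ZFC + \neg\CH$ with an $\Fsigma$ (even closed) graph on $\Cantorspace$ that has a clique of size $\continuum$ but no perfect clique. I would use the Kubi\'s--Vejnar graph from \cite[Theorem 2.1]{KubisVejnar}, which is already used above. Under $\CH$ it has an uncountable clique, which then has size $\continuum$, but we need $\neg\CH$. The most direct route is to cite the known consistency result accompanying \cite{KubisVejnar}, or the Geschke-style results on closed graphs: in the \Cohen model obtained by adding $\aleph_2$ \Cohen reals, there is a closed graph with a clique of size $\aleph_2 = \continuum$ and no perfect clique.

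If that result is not available, I would build the model directly. Start from a ground model of $\CH$ and add $\aleph_2$ \Cohen reals $\sequence{x_\alpha}[\alpha < \omega_2]$. Take $G$ to be an $\Fsigma$ graph, defined in the ground model, such that any two distinct mutually generic \Cohen reals are $G$-related. A natural choice is the comeager graph that the \Baire category argument in the proof of Theorem \ref{introtheorem:main} extracts from \cite{KubisVejnar}, adjusted so that it is comeager in $\Cantorspace \times \Cantorspace$ while still having no perfect clique. Then $\set{x_\alpha}[\alpha < \omega_2]$ is a clique of size $\continuum$. For the absence of perfect cliques, ``$G$ has a perfect clique'' is a $\Sigma^1_2$ statement about the code of $G$. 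Since the ground model has no perfect clique, Shoenfield absoluteness shows that the extension has none either.

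The main obstacle is the choice of $G$. It must be meager-free enough, namely comeager in the product, that mutually \Cohen generic pairs fall into $G$, and at the same time it must have no perfect clique, which is absolute. Mycielski's theorem seems to forbid this: a comeager relation has a perfect set whose distinct pairs all lie in it. So the plan must instead use a graph that is comeager only on a suitable nonstandard product or coding, with the generics taken from a forcing adapted to $G$, as in Kubi\'s--Vejnar. I would first try to get the clique of size $\continuum$ by iterating their construction with finite supports over a $\CH$ model, and rely on Shoenfield absoluteness for the no-perfect-clique half.
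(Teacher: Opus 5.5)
Your reduction is correct and matches the paper. Once there is an $\Fsigma$ graph $G$ on $\Cantorspace$ with a clique of size $\continuum$ and no perfect clique, the proof of Theorem \ref{introtheorem:main} goes through unchanged, and one $G$ serves all $n$. The gap is that you never produce such a $G$ in a model of $\neg\CH$, and the model you name provably does not work.

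Add $\aleph_2$ \Cohen reals to a model of $\CH$, so that $\continuum=\aleph_2$. In that extension every analytic graph with a clique of size $\aleph_2$ has a perfect clique. The argument is as follows. First absorb the code of the graph into the ground model, and pick countably supported names for $\aleph_2$ clique elements, with conditions forcing them into the clique. Then use $\CH$ to apply the $\Delta$-system lemma and thin out to isomorphic names and conditions over a common root. Over the intermediate model generated by the root, the clique elements become $f(h_\alpha)$ for a single \Borel function $f$ and mutually generic \Cohen reals $h_\alpha$ below one fixed condition. Hence the set of pairs $(h,h')$ with $f(h)\neq f(h')$ and $f(h)\mathrel{G}f(h')$ is comeager in $U\times U$ for a basic open $U$. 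Mycielski's theorem then gives a perfect set on which $f$ is continuous and injective and whose image is a $G$-clique.

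This is the \Cohen-forcing $\Delta$-system argument behind \cite[Proposition 3.4]{KubisShelah}, which the paper uses for Theorem \ref{theorem:consistency}. Applied to the \Borel hypergraph of scrambled $(n+1)$-tuples, it shows that the conclusion you are trying to prove actually fails in the \Cohen model. So the Mycielski obstruction you noticed is not an artifact of one choice of $G$; it rules out every \Borel graph there.

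Your remaining fallback, iterating the Kubi\'s--Vejnar construction with finite support, is not an argument. With a graph of their type it also cannot work. Suppose the off-diagonal part of $X\times X$ is covered by countably many graphs of functions $f_k$ and their inverses. Close a subset $X_0\subseteq X$ of size $\aleph_1$ under all $f_k$ to get a set $Z$ of size $\aleph_1$. Any $x\in X\setminus Z$ would give $X_0\subseteq\{f_k(x) : k\in\N\}$, which is impossible, so $X \subseteq Z$ and $|X|\le\aleph_1$.

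What is needed is a genuinely different \Borel graph, together with a forcing that adds a clique of size $\aleph_2$ while avoiding the $\Delta$-system collapse described above. This is the nontrivial input the paper imports: Shelah's \cite[Theorem 1.13]{Shelah}, applied with $\lambda=\mu=\aleph_2$ over a model of $\mathtt{GCH}$, gives a model with $\continuum=\aleph_2$ and an $\Fsigma$ graph with a clique of size $\continuum$ but no perfect clique. Your appeal to Shoenfield absoluteness handles only the easy half, the absence of a perfect clique. The clique of size $\continuum$ is the whole difficulty, and your proposal leaves it unresolved.
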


\begin{proof}
Applying \cite[Theorem 1.13]{Shelah} with
$\lambda=\mu=\aleph_2$ over a model of $\mathtt{GCH}$, we can assume
that $\continuum=\aleph_2$ and that there is an $\Fsigma$ graph $G$
on $\Cantorspace$ with a clique of cardinality $\continuum$ but
no perfect clique. The proof of Theorem \ref{introtheorem:main}
therefore yields the desired result.
\end{proof}

A \definedterm{hypergraph} is a permutation-invariant set of
injective sequences.

\begin{theorem} \label{theorem:consistency}
It is consistent with $\ZFC + \neg \CH$ that, for all $n \ge 2$, every \Borel
function on a \Polish space with a (distributionally) $n$-scrambled set of cardinality
$\aleph_2$ has a (distributionally) $n$-scrambled perfect set.
\end{theorem}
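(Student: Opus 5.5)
We will obtain the model by forcing over a model of $\mathtt{GCH}$ (or $\mathtt{CH}$) with a countable support iteration of length $\omega_2$ of Sacks forcing (or another proper forcing adding reals with the Sacks property, e.g.\ the side-by-side Sacks product). The combinatorial principle we aim for in the extension is a polarized partition property for definable hypergraphs: for every $n \ge 2$, every \Borel (indeed analytic) $n$-dimensional hypergraph $H$ on a \Polish space $X$, and every set $Z \subseteq X$ of cardinality $\aleph_2$ such that every injection in $\functions{n}{Z}$ lies in $H$, there is a perfect set $P \subseteq X$ with every injection in $\functions{n}{P}$ in $H$. Granting this, the theorem follows directly. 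Given a \Borel function $f$ on $X$, the set $\LiYorke{f}$ (respectively $\distributionallyscrambled{f}$) restricted to injective $n$-sequences is \Borel; this is a routine computation, since $\liminf$, $\limsup$, and the density conditions are obtained by countable quantification over rational $\epsilon$ and over natural numbers applied to the \Borel functions $x \mapsto \uppermetric{X}(f^i \composition x)$ and $x \mapsto \lowermetric{X}(f^i \composition x)$. Each is also permutation-invariant, so it is a \Borel hypergraph. An $n$-scrambled set of cardinality $\aleph_2$ is precisely a clique of size $\aleph_2$ in this hypergraph, and the principle then supplies a perfect clique, that is, an $n$-scrambled perfect set. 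Since $\continuum = \aleph_2$ in the Sacks model, $\neg\CH$ holds.

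The key steps are therefore as follows.
\begin{enumerate}
  \item Verify the \Borel complexity of the hypergraphs, as just described.
  \item Reduce to the case where $X = \Cantorspace$ and the hypergraph is $G_\delta$ or closed on a suitable product. Fix a \Borel code, and use the perfect set theorem together with a change of topology so that $H$ becomes relatively open/closed where needed.
  \item Prove the clique principle in the iterated Sacks model.
\end{enumerate}

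For step 3 I would argue as follows. Let $Z$ be an $H$-clique of size $\aleph_2$ in $V[G_{\omega_2}]$. By a reflection argument using properness and the $\aleph_2$-chain condition, there is an intermediate stage $\alpha < \omega_2$ such that the code for $H$ lies in $V[G_\alpha]$ and $Z$ contains a real $z$ not in $V[G_\alpha]$. In fact we want $n$ such reals, each added generically over the model generated by the others. The quotient $V[G_{\omega_2}]$ over $V[G_\alpha]$ is again a Sacks-type iteration. A new real added by it is determined by a continuous function on a perfect Sacks condition, so $z = \dot h(s)$ for a generic Sacks real $s$ and a continuous injective $h$ in the ground model on some perfect tree $p$. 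The statement ``every injective $n$-tuple from $\image{h}{[p]}$ drawn from mutually generic branches lies in $H$'' is forced. By the fusion argument underlying Mycielski's theorem, we can then shrink $p$ to a perfect $p'$ all of whose injective $n$-tuples of branches are sufficiently generic. By $\Sigma^1_1$/$\Pi^1_1$ absoluteness, $\image{h}{[p']}$ is then an $H$-clique.

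The main obstacle is precisely this step: producing $n$ elements of $Z$ that behave like \emph{mutually} generic Sacks reals, rather than merely reals each outside an intermediate model. Iterated Sacks reals are not mutually generic in the product sense. So I would first try to replace the iteration with the countable support \emph{product} of $\aleph_2$ Sacks reals, which is also proper with fusion. There, a $\Delta$-system argument applied to the supports of names for $\aleph_2$ elements of $Z$ lets us pick $n$ names with pairwise disjoint new parts over a common root. These names have isomorphic conditions, and this yields the mutual genericity needed for the fusion and Mycielski argument. The distributional case is identical, with the distributional hypergraph in place of the Li--Yorke one.
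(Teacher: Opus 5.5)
Your reduction matches the paper's. The paper notes that $n$-ary (distributional) scrambledness is a Borel, permutation-invariant hypergraph, and then quotes \cite[Proposition 3.4]{KubisShelah}. That result gives a model of $\neg\CH$ in which every Borel $n$-ary hypergraph on a Polish space with a clique of cardinality $\aleph_2$ has a perfect clique; the Remark after the theorem indicates it is a Cohen-real model.

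The genuine difference is that you prove this clique principle yourself, in the countable support product of $\aleph_2$ Sacks reals. That route does go through: it is self-contained and gives $\continuum=\aleph_2$. It is, however, heavier than the Cohen argument. There, names for reals are continuous on comeager sets, finite products of Cohen forcing are again Cohen, and Mycielski's theorem directly supplies perfect sets of mutually generic tuples.

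If you write your version out, make the following points explicit.
\begin{enumerate}
  \item You need CH in the ground model, both for the $\Delta$-system lemma on countable supports and for counting isomorphism types of (condition, continuous reading of the name).
  \item Absorb the support of the code of $H$ into the root $R$.
  \item Pass to $V[g_R]$. There the product lemma shows that the ground-model poset $(\mathbb{S}^{D})^n$ forces the relevant $n$-tuple into $H$.
  \item That poset is not Sacks forcing of $V[g_R]$, and its generic is a countable sequence of reals, so there is no single perfect tree to shrink. Instead, build by fusion, over a countable elementary submodel of $H_\theta^{V[g_R]}$, a perfect set in $(2^{\N})^{D}$ whose distinct $n$-tuples are mutually generic.
  \item Conclude by $\Pi^1_1$ absoluteness.
\end{enumerate}

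You can drop both the iterated-Sacks discussion and your step 2. Step 2 is never used. Moreover, refining the topology on $X$ cannot in general make a Borel hypergraph closed or $G_\delta$; the non-$E_0$ graph is a counterexample.
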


\begin{proof}
By \cite[Proposition 3.4]{KubisShelah}, we can assume that every
\Borel $n$-ary hypergraph on a \Polish space with a clique of
cardinality $\aleph_2$ has a perfect clique. But $n$-ary
(distributional) scrambledness is \Borel.
\end{proof}

\begin{remark}
The forcing argument underlying
\cite[Proposition 3.4]{KubisShelah} also yields the analog of
Theorem \ref{theorem:consistency} for (distributionally) finitely
scrambled sets. To see this, note that---after the usual
$\Delta$-system reduction to a common name---the $k$-fold
\Cohen product forces that the corresponding evaluations form a
(distributionally) $k$-scrambled $k$-element set for all $k \ge 2$,
and the perfect-set construction can meet the requirements for all finite
arities simultaneously.
\end{remark}

\begin{acknowledgements}
I would like to thank Alexander Kechris for encouraging me to utilize
ChatGPT-5.6 Sol, which provided substantial assistance in the discovery
and development of the results.
\end{acknowledgements}

\bibliographystyle{amsalpha}
\bibliography{bibliography}

\end{document}